\newtheorem{theorem}{Theorem}
\newtheorem{corollary}{Corollary}
\newtheorem{lemma}{Lemma}
\theoremstyle{definition}
\newtheorem{definition}{Definition}
\newtheorem{remark}{Remark}
\begin{document}
\baselineskip=14.5pt
\title[On a generalization of Menon-Sury identity to number fields]{On a generalization of Menon-Sury identity to number fields involving a Dirichlet Character} 

\author{Jaitra Chattopadhyay and Subha Sarkar}
\address[Jaitra Chattopadhyay]{Indian Institute of Technology, Guwahati, Guwahati-781 039, Assam, India}
\email[Jaitra Chattopadhyay]{jaitra@iitg.ac.in, chat.jaitra@gmail.com}
\address[Subha Sarkar]{Ramakrishna Mission Vivekananda Educational and Research Institute, Belur Math, Howrah-711 202, West Bengal, India}
\email[Subha Sarkar]{subhasarkar2051993@gmail.com}

\begin{abstract}
For every positive integer $n$, Sita Ramaiah's identity states that
\medskip
\begin{equation*}
\sum_{a_1, a_2, a_1+a_2 \in (\mathbb{Z}/n\mathbb{Z})^*}  \gcd(a_1+a_2-1,n) = \phi_2(n)\sigma_0(n) \; \text{ where } \; \phi_2(n)= \sum_{a_1, a_2, a_1+a_2 \in (\mathbb{Z}/n\mathbb{Z})^*} 1,
\end{equation*}
\medskip
where $(\mathbb{Z}/n\mathbb{Z})^*$ is the multiplicative group of units of the ring $\mathbb{Z}/n\mathbb{Z}$ and $\sigma_s(n) = \displaystyle\sum_{d\mid n}d^s$.

\smallskip

This identity can also be viewed as a generalization of Menon's identity. In this article, we generalize this identity to an algebraic number field $K$ involving a Dirichlet character $\chi$. Our result is a further generalization of a recent result in \cite{wj} and \cite{sury}.
 
\end{abstract}

\renewcommand{\thefootnote}{}

\footnote{2020 \emph{Mathematics Subject Classification}: Primary 11R04, 11A25.}

\footnote{\emph{Key words and phrases}: Ring of algebraic integers, Dirichlet character}

\renewcommand{\thefootnote}{\arabic{footnote}}
\setcounter{footnote}{0}

\maketitle

\section{Introduction}
For a positive integer $n$, the classical Menon's identity \cite{menon} states that
\begin{equation}\label{menon}
\displaystyle\sum_{\substack{a=1 \\ \gcd(a,n)=1}}^n \gcd(a-1,n) =\phi(n)\sigma_0(n),
\end{equation}
where $\phi(n)$ is the Euler's totient function and $\sigma_0(n) =\displaystyle \sum_{d \mid n}1$. This identity has been generalized in many direction by several authors (cf. \cite{h,lk,m,toth2,toth3}).

\smallskip

In 2009, Sury \cite{sury} generalized \eqref{menon} and proved that
\begin{equation}\label{sury}
\sum_{\substack{1 \leq a, b_1, b_2, \ldots, b_s \leq n \\ \gcd(a,n)=1}} \gcd(a-1,b_1,\ldots, b_s,n)= \phi(n)\sigma_s(n), \; \text{ where }\; \sigma_s(n) =\displaystyle \sum_{d \mid n} d^s.
\end{equation}

\smallskip

For an integer $n \geq 2$ and a Dirichlet character $\chi$ modulo $n$ with conductor $d$, Cao and Zhao \cite{zc} recently derived the following identity.
\begin{equation}\label{zc}
\sum_{\substack{a=1 \\ \gcd(a,n)=1}}^n \gcd(a-1,n)\chi(a) = \phi(n) \sigma_0\left(\frac{n}{d}\right).
\end{equation}

Later, Li, Hu and Kim \cite{lhk} generalized \eqref{sury} and  \eqref{zc} and proved that
\begin{equation}\label{lhk}
\sum_{\substack{1 \leq a, b_1, b_2, \ldots, b_s \leq n \\ \gcd(a,n)=1}} \gcd(a-1,b_1,\ldots, b_s,n)\chi(a)= \phi(n)\sigma_s\left(\frac{n}{d}\right),
\end{equation}
where $\chi$ is a Dirichlet character modulo $n$ with conductor $d$.

\smallskip

Recently, for any integer $k \geq 1$, T\'{o}th \cite{toth} introduced an arithmetic function $\phi_{k}$ which is a generalization of  the Euler's totient function. The function $\phi_{k}(n)$ is defined by
\begin{equation}
\phi_k(n)= \sum_{\substack{a_1, a_2, \ldots, a_k =1 \\ \gcd(a_1\ldots a_k,n)=1 \\ \gcd(a_1+\cdots+a_k,n)=1}}^n 1.
\end{equation} 
He proved that for every integer $k \geq 1$, the function $\phi_k$ is multiplicative and the Menon-type identity
\begin{equation}\label{toth}
\sum_{\substack{a_1, a_2, \ldots, a_k =1 \\ \gcd(a_1\ldots a_k,n)=1 \\ \gcd(a_1+\cdots+a_k,n)=1}}^n \gcd(a_1+\cdots+a_k-1,n) = \phi_k(n)\sigma_0(n)
\end{equation} holds.

%


Here we note that the function $\phi_2(n)$ was introduced by Arai and Gakuen \cite{ag}, and Carlitz \cite{c} proved the corresponding formula for $\phi_2(n)$. Also for $k=2$, the identity \eqref{toth},
\begin{equation}\label{sr}
\sum_{\substack{a_1, a_2 =1 \\ \gcd(a_1a_2,n)=1 \\ \gcd(a_1+a_2,n)=1}}^n \gcd(a_1+a_2-1,n) = \phi_2(n)\sigma_0(n)
\end{equation}
was deduced by Sita Ramaiah \cite{sr}.

Recently, Ji and Wang \cite{wj} generalized Sita Ramaiah's identity involving a  Dirichlet character. More precisely, they proved that
\begin{equation}\label{wj}
\sum_{\substack{a_1, a_2 =1 \\ \gcd(a_1a_2,n)=1 \\ \gcd(a_1+a_2,n)=1}}^n \gcd(a_1+a_2-1,n)\chi(a_1) =\mu(d) \phi\left(\frac{n_0^2}{d}\right) \phi_2\left(\frac{n}{n_0}\right)\sigma_0\left(\frac{n}{d}\right),
\end{equation}
where $\mu$ is the M\"obius function, $\chi$ is a Dirichlet character modulo $n$ with conductor $d$, the integer $n_{0}$ is such that $n_0 \mid n$ and $n_0$ has the same prime factors as that of $d$ and $\gcd\left(n_0, \frac{n}{n_0}\right) = 1$.

In this article, we generalize the identity \eqref{wj} in the light of Sury's identity \eqref{sury} to an algebraic number field $K$ with ring of integers $\mathcal{O}_{K}$. Before we state the main theorem of our paper, we introduce the fundamental functions defined on the integral ideals of $\mathcal{O}_{K}$. In what follows, for an element $\alpha \in \mathcal{O}_{K}$, the principal ideal $\alpha\mathcal{O}_{K}$ is denoted by $\langle \alpha \rangle$ and $\gcd(\alpha,\mathfrak{a})$ will denote $\gcd(\langle \alpha \rangle,\mathfrak{a})$.
\begin{definition}
Let $K$ be an algebraic number field with ring of integers $\mathcal{O}_{K}$ and let $\mathfrak{a}$ be a non-zero ideal in $\mathcal{O}_{K}$. Then we define the following functions on the set of integral ideals of $K$.
\begin{enumerate}
\item The M\"obius $\mu$ function is defined as 
\begin{equation*}
\mu(\mathfrak{a})=
\begin{cases}
    1   & ~ \text{ if } \mathfrak{a} = \mathcal{O}_{K},\\
    (-1)^{t} &  ~ \text{ if } \mathfrak{a} = \displaystyle\prod_{\i = 1}^{t}\mathfrak{p}_{i} \mbox{ where each } \mathfrak{p}_{i} \mbox{ is a prime ideal and } \mathfrak{p}_{i} \neq \mathfrak{p}_{j} \mbox{ for } i \neq j,\\
    0 & ~ \text{ otherwise. }
\end{cases}
\end{equation*}

\medskip

\item The Euler totient function is defined as $$\phi(\mathfrak{a}) := |(\mathcal{O}_{K}/\mathfrak{a})^{*}| =  N(\mathfrak{a}) \prod_{\mathfrak{p}\mid \mathfrak{a}}\left(1-\frac{1}{N(\mathfrak{p})}\right),$$ where for any ideal $\mathfrak{b}$ in $\mathcal{O}_K$, $N(\mathfrak{b})$ stands for the absolute norm $|\mathcal{O}_{K}/\mathfrak{b}|$ of the ideal $\mathfrak{b}$.

\medskip

\item For an integer $s \geq 0$, the function $\sigma_{s}$ is defined by $$\sigma_{s}(\mathfrak{a}) = \displaystyle\sum_{\mathfrak{d} \mid \mathfrak{a}} N(\mathfrak{d})^{s}.$$ 

\item The function $\phi_{2}(\mathfrak{a})$ is defined as $$\phi_{2}(\mathfrak{a}) = \sum_{\substack{a,b \in (\mathcal{O}_{K}/\mathfrak{a})^{*} \\ a+b \in (\mathcal{O}_{K}/\mathfrak{a})^{*}}} 1.$$
\end{enumerate}
\end{definition}

\begin{remark}
It is also known that (cf. \cite{wj}), $\phi_{2}(\mathfrak{a}) = \phi(\mathfrak{a})^2 \displaystyle\sum_{\mathfrak{d}\mid \mathfrak{a}}\frac{\mu(\mathfrak{d})}{\phi(\mathfrak{d})}.$ 
\end{remark}  
 
Now, we state the main theorem as follows.

\begin{theorem}\label{MAIN-TH}
Let $K$ be an algebraic number field with ring of integers $\mathcal{O}_{K}$ and let $\mathfrak{a}$ be a non-zero ideal in $\mathcal{O}_{K}$. Let $\chi$ be a Dirichlet character modulo $\mathfrak{a}$ with conductor $\mathfrak{d}$ and let $s \geq 0$ be an integer. Then we have 
\begin{equation}
\sum_{\substack{ a_1, a_2, b_1,b_2,\ldots, b_s \in \mathcal{O}_{K}/\mathfrak{a} \\ \gcd(a_1a_2,\mathfrak{a})=1 \\ \gcd(a_1+a_2,\mathfrak{a})=1}} N(\gcd(a_1+a_2-1,b_1,b_2,\ldots, b_s,\mathfrak{a}))\chi(a_1) =\mu(\mathfrak{d}) \phi\left(\frac{\mathfrak{a}_{0}^{2}}{\mathfrak{d}}\right) \phi_2\left(\frac{\mathfrak{a}}{\mathfrak{a}_{0}}\right)\sigma_s\left(\frac{\mathfrak{a}}{\mathfrak{d}}\right).
\end{equation}
where $\mathfrak{a}_{0} \mid \mathfrak{a}$ is such that $\mathfrak{a}_0$ has the same prime ideal factors as $\mathfrak{d}$ and $\left(\mathfrak{a}_0, \frac{\mathfrak{a}}{\mathfrak{a}_0}\right) = 1$.
\end{theorem}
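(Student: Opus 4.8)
The plan is to prove the identity by reducing to prime-power ideals via multiplicativity and then evaluating an explicit character sum. Write $F(\mathfrak a)$ for the left-hand side. First I would show that $F$ is multiplicative: if $\mathfrak a = \mathfrak b\mathfrak c$ with $\gcd(\mathfrak b,\mathfrak c)=\mathcal O_K$, then the Chinese Remainder Theorem gives $\mathcal O_K/\mathfrak a \cong \mathcal O_K/\mathfrak b \times \mathcal O_K/\mathfrak c$, under which the unit conditions on $a_1,a_2,a_1+a_2$ factor, the character splits as $\chi=\chi_{\mathfrak b}\chi_{\mathfrak c}$, and $N(\gcd(\,\cdot\,,\mathfrak a))$ factors as a product of the two component norms since the norm is multiplicative on coprime ideals (the ``$-1$'' being compatible with CRT). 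Hence $F(\mathfrak a)=F(\mathfrak b)F(\mathfrak c)$. Since the conductor $\mathfrak d$, the ideal $\mathfrak a_0$, and each of $\mu,\phi,\phi_2,\sigma_s$ behave multiplicatively over coprime ideals, the right-hand side is multiplicative as well, so it suffices to verify the identity when $\mathfrak a=\mathfrak p^e$. Here $\mathfrak d=\mathfrak p^f$ for some $0\le f\le e$, with $\mathfrak a_0=\mathcal O_K$ if $f=0$ and $\mathfrak a_0=\mathfrak p^e$ if $f\ge 1$.

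Next I would dispose of the auxiliary variables $b_1,\dots,b_s$. Using $N(\mathfrak m)=\sum_{\mathfrak m'\mid\mathfrak m}\phi(\mathfrak m')$ to expand the norm of the gcd, interchanging summation, and noting that exactly $(N(\mathfrak a)/N(\mathfrak m'))^s$ tuples $(b_1,\dots,b_s)$ satisfy $\mathfrak m'\mid\langle b_i\rangle$ for all $i$, one obtains
\begin{equation*}
F(\mathfrak a)=\sum_{\mathfrak m\mid\mathfrak a}\phi(\mathfrak m)\left(\frac{N(\mathfrak a)}{N(\mathfrak m)}\right)^{s}C(\mathfrak m),\qquad
C(\mathfrak m):=\sum_{\substack{a_1,a_2\in(\mathcal O_K/\mathfrak a)^{*}\\ a_1+a_2\in(\mathcal O_K/\mathfrak a)^{*}\\ a_1+a_2\equiv 1\ (\mathrm{mod}\ \mathfrak m)}}\chi(a_1),
\end{equation*}
which reduces everything to the sums $C(\mathfrak p^{l})$ for $0\le l\le e$.

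The heart of the argument, and the step I expect to be the main obstacle, is the evaluation of $C(\mathfrak p^{l})$ and its dependence on the conductor. For $l\ge 1$ the congruence $a_1+a_2\equiv 1\ (\mathrm{mod}\ \mathfrak p)$ forces $a_1+a_2$ to be a unit automatically and pins $a_2$ modulo $\mathfrak p^{l}$ once $a_1$ is fixed, with $a_2$ then a unit precisely when $a_1\not\equiv 1\pmod{\mathfrak p}$; counting the $q^{e-l}$ lifts of $a_2$, where $q=N(\mathfrak p)$, gives $C(\mathfrak p^{l})=q^{e-l}\,G$ with $G:=\sum_{a_1\in(\mathcal O_K/\mathfrak p^{e})^{*},\ a_1\not\equiv 1\ (\mathfrak p)}\chi(a_1)$. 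The value of $G$ is governed entirely by $f$: writing $G=\sum_{a_1\,\mathrm{unit}}\chi(a_1)-\sum_{a_1\equiv 1\ (\mathfrak p)}\chi(a_1)$, the first sum vanishes unless $f=0$ by orthogonality, while the second is the sum of $\chi$ over the subgroup $1+\mathfrak p$ of units, which equals $q^{e-1}$ exactly when $\chi$ is trivial there, i.e.\ when $f\le 1$, and is $0$ otherwise. This yields $G=q^{e-1}(q-2)$ if $f=0$, $G=-q^{e-1}$ if $f=1$, and $G=0$ if $f\ge 2$. The remaining case $l=0$ (where the unit condition on $a_1+a_2$ is genuine) is treated directly: for each unit $a_1$ there are $q^{e-1}(q-2)$ admissible $a_2$, so $C(\mathcal O_K)=q^{e-1}(q-2)\sum_{a_1\,\mathrm{unit}}\chi(a_1)$, equal to $q^{e-1}(q-2)\phi(\mathfrak p^{e})$ if $f=0$ and $0$ otherwise.

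Finally I would substitute these values into the divisor sum for $F(\mathfrak p^{e})$ and match the right-hand side in each case. Writing $\phi(\mathfrak p^{l})=q^{l-1}(q-1)$ and reindexing, the sum $\sum_{l=1}^{e}\phi(\mathfrak p^{l})q^{(s+1)(e-l)}$ reduces to $(q-1)q^{e-1}\sigma_s(\mathfrak p^{e-1})$, while the $l=0$ term supplies the top piece $q^{se}$ of $\sigma_s(\mathfrak p^{e})$. Using $\phi_2(\mathfrak p^{e})=q^{2(e-1)}(q-1)(q-2)$ (from the Remark) and $\phi(\mathfrak p^{2e-1})=q^{2(e-1)}(q-1)$, one checks that $F(\mathfrak p^{e})$ equals $\phi_2(\mathfrak p^{e})\sigma_s(\mathfrak p^{e})$ when $f=0$, equals $-\phi(\mathfrak p^{2e-1})\sigma_s(\mathfrak p^{e-1})$ when $f=1$, and equals $0$ when $f\ge 2$. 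These are precisely the values of $\mu(\mathfrak d)\phi(\mathfrak a_0^{2}/\mathfrak d)\phi_2(\mathfrak a/\mathfrak a_0)\sigma_s(\mathfrak a/\mathfrak d)$ at $\mathfrak a=\mathfrak p^{e}$ (recalling $\mu(\mathfrak p^{f})=0$ for $f\ge 2$), which completes the prime-power case and hence, by multiplicativity, the theorem.
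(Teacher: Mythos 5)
Your proposal is correct, and while it shares the paper's overall skeleton (reduce to $\mathfrak{a}=\mathfrak{p}^{e}$ by CRT-multiplicativity, then compute), the prime-power computation is organized genuinely differently. The paper first stratifies the $b$-sum by the \emph{exact} value $\mathfrak{p}^{r}=\gcd(b_1,\ldots,b_s,\mathfrak{p}^m)$, which leaves the inner quantity $\sum N(\gcd(a_1+a_2-1,\mathfrak{p}^{r}))\chi(a_1)$ to be evaluated by a second stratification over the exact power of $\mathfrak{p}$ dividing $a_1+a_2-1$ (Lemma \ref{lfs1}), relying on three character-sum lemmas imported from \cite{wj} (Lemmas \ref{fl}--\ref{f3}); this produces telescoping sums of differences such as $N(\mathfrak{p})^{(m-r)s}-N(\mathfrak{p})^{(m-r-1)s}$ that must be recombined at the end. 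You instead apply the convolution identity $N(\mathfrak{m})=\sum_{\mathfrak{m}'\mid\mathfrak{m}}\phi(\mathfrak{m}')$ once, which absorbs both the $a_1+a_2-1$ and the $b_i$ dependence into a single divisor sum $\sum_{\mathfrak{m}\mid\mathfrak{a}}\phi(\mathfrak{m})(N(\mathfrak{a})/N(\mathfrak{m}))^{s}C(\mathfrak{m})$, and you evaluate $C(\mathfrak{p}^{l})$ from scratch by the lifting/orthogonality argument ($C(\mathfrak{p}^{l})=q^{e-l}G$ with $G$ determined by whether $\chi$ is trivial on $(\mathcal{O}_K/\mathfrak{p}^{e})^{*}$ and on $1+\mathfrak{p}$); your sums $C(\mathfrak{p}^{l})$, $l\ge 1$, are in fact exactly the quantity of Lemma \ref{f2}, which you reprove rather than cite, and your $l=0$ term replaces Lemma \ref{f3}. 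What your route buys is a shorter, self-contained prime-power case with no exact-gcd bookkeeping and no telescoping; what the paper's route buys is a factored form \eqref{JUST ABOVE} in which the $a$-sum and the $b$-count are computed independently, so the $b$-count (Lemma \ref{lfs2}) can be reused verbatim for other weights. Your closing numerics check out: for $f=0$ the $l=0$ term contributes $q^{se}\phi_2(\mathfrak{p}^{e})$ and the $l\ge 1$ terms contribute $\phi_2(\mathfrak{p}^{e})\sigma_s(\mathfrak{p}^{e-1})$, and for $f=1$ one gets $-\phi(\mathfrak{p}^{2e-1})\sigma_s(\mathfrak{p}^{e-1})$, matching $\mu(\mathfrak{d})\phi(\mathfrak{a}_0^{2}/\mathfrak{d})\phi_2(\mathfrak{a}/\mathfrak{a}_0)\sigma_s(\mathfrak{a}/\mathfrak{d})$ in every case.
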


Note that, for $K = \mathbb{Q}$, we obtain the following corollary.

\begin{corollary}\label{mt}
Let $n$ be a positive integer and $\chi$ a Dirichlet character modulo $n$ with conductor $d$. For a non-negative integer $s$, we have the following identity:
\begin{equation}\label{meq}
\sum_{\substack{1 \leq a_1, a_2, b_1,b_2,\ldots, b_s \leq n \\ \gcd(a_1a_2,n)=1 \\ \gcd(a_1+a_2,n)=1}} \gcd(a_1+a_2-1,b_1,b_2,\ldots, b_s,n)\chi(a_1) =\mu(d) \phi\left(\frac{n_0^2}{d}\right) \phi_2\left(\frac{n}{n_0}\right)\sigma_s\left(\frac{n}{d}\right).
\end{equation}
where $\mu(n)$ is the M\"obius function, $n_0 \mid n$ such that $n_0$ has the same prime factors as $d$ and $\left(n_0, \frac{n}{n_0}\right) = 1$.
\end{corollary}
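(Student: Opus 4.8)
The plan is to first linearize the norm of the greatest common divisor by the ideal-theoretic identity $N(\mathfrak{b})=\sum_{\mathfrak{e}\mid\mathfrak{b}}\phi(\mathfrak{e})$, the analogue of $\sum_{d\mid n}\phi(d)=n$, which follows from multiplicativity together with $\sum_{i=0}^{k}\phi(\mathfrak{p}^i)=N(\mathfrak{p})^k$. Applying it to $\mathfrak{b}=\gcd(a_1+a_2-1,b_1,\dots,b_s,\mathfrak{a})$, an ideal $\mathfrak{e}$ contributes exactly when $\mathfrak{e}\mid\mathfrak{a}$, $\mathfrak{e}\mid a_1+a_2-1$ and $\mathfrak{e}\mid b_i$ for every $i$. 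Since for a fixed $\mathfrak{e}\mid\mathfrak{a}$ there are exactly $N(\mathfrak{a})/N(\mathfrak{e})$ residues $b\in\mathcal{O}_K/\mathfrak{a}$ divisible by $\mathfrak{e}$, carrying out the sum over $b_1,\dots,b_s$ first collapses that part and, writing $S$ for the left-hand side of the theorem, leaves
\[
S=\sum_{\mathfrak{e}\mid\mathfrak{a}}\phi(\mathfrak{e})\left(\frac{N(\mathfrak{a})}{N(\mathfrak{e})}\right)^{s}T(\mathfrak{e}),\qquad T(\mathfrak{e})=\sum_{\substack{a_1,a_2\in\mathcal{O}_K/\mathfrak{a}\\ \gcd(a_1a_2,\mathfrak{a})=1,\ \gcd(a_1+a_2,\mathfrak{a})=1\\ a_1+a_2\equiv 1\ (\mathrm{mod}\ \mathfrak{e})}}\chi(a_1).
\]

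The heart of the argument is the evaluation of $T(\mathfrak{e})$, and here I would pass to prime powers. By the Chinese Remainder Theorem $\mathcal{O}_K/\mathfrak{a}\cong\prod_{\mathfrak{p}}\mathcal{O}_K/\mathfrak{p}^{k_\mathfrak{p}}$, the unit and coprimality conditions split prime by prime, the congruence $a_1+a_2\equiv1$ splits according to $\mathfrak{e}=\prod_\mathfrak{p}\mathfrak{p}^{j_\mathfrak{p}}$, and $\chi=\prod_\mathfrak{p}\chi_\mathfrak{p}$ factors accordingly, so that $T(\mathfrak{e})=\prod_\mathfrak{p}T_\mathfrak{p}(j_\mathfrak{p})$ with each local factor a character sum over $\mathcal{O}_K/\mathfrak{p}^{k}$. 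In the local factor I would set $u=a_1+a_2$ and substitute $a_1=ut$; since $u$ is a unit this yields $T_\mathfrak{p}(j)=U_\mathfrak{p}(j)\,J_\mathfrak{p}$, where $U_\mathfrak{p}(j)=\sum_{u\equiv1\,(\mathfrak{p}^{j}),\,u\in(\mathcal{O}_K/\mathfrak{p}^k)^*}\chi_\mathfrak{p}(u)$ and $J_\mathfrak{p}=\sum_{t,\,1-t\in(\mathcal{O}_K/\mathfrak{p}^k)^*}\chi_\mathfrak{p}(t)$. The factor $U_\mathfrak{p}(j)$ is a sum of $\chi_\mathfrak{p}$ over the subgroup of units congruent to $1$ modulo $\mathfrak{p}^{j}$; by the definition of the conductor it vanishes unless $\mathfrak{p}^{c_\mathfrak{p}}\mid\mathfrak{p}^{j}$, and otherwise equals the order $\phi(\mathfrak{p}^k)/\phi(\mathfrak{p}^j)$ of that subgroup. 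Consequently $T(\mathfrak{e})=0$ unless $\mathfrak{d}\mid\mathfrak{e}$, and when $\mathfrak{d}\mid\mathfrak{e}$ one obtains $T(\mathfrak{e})=\bigl(\prod_\mathfrak{p}J_\mathfrak{p}\bigr)\,\phi(\mathfrak{a})/\phi(\mathfrak{e})$.

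It then remains to evaluate $J_\mathfrak{p}$ and to reassemble. Writing $J_\mathfrak{p}=\sum_{t\in(\mathcal{O}_K/\mathfrak{p}^k)^*}\chi_\mathfrak{p}(t)-\sum_{t\equiv1\,(\mathfrak{p})}\chi_\mathfrak{p}(t)$ and using orthogonality, I expect three cases in the conductor exponent $c_\mathfrak{p}$: for $c_\mathfrak{p}=0$ one finds $\phi(\mathfrak{p}^k)J_\mathfrak{p}=\phi_2(\mathfrak{p}^k)$ (via the prime-power form $\phi_2(\mathfrak{p}^k)=\phi(\mathfrak{p}^k)^2(1-1/\phi(\mathfrak{p}))$ of the Remark); for $c_\mathfrak{p}=1$ one finds $J_\mathfrak{p}=-N(\mathfrak{p})^{k-1}$; and for $c_\mathfrak{p}\ge2$ one finds $J_\mathfrak{p}=0$. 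The last case forces the entire expression to vanish unless $\mathfrak{d}$ is squarefree, which is exactly the content of the factor $\mu(\mathfrak{d})$. Substituting $T(\mathfrak{e})$ and reindexing the surviving divisor sum by $\mathfrak{g}=(\mathfrak{a}/\mathfrak{d})/(\mathfrak{e}/\mathfrak{d})$ turns $\sum_{\mathfrak{d}\mid\mathfrak{e}\mid\mathfrak{a}}(N(\mathfrak{a})/N(\mathfrak{e}))^{s}$ into $\sigma_s(\mathfrak{a}/\mathfrak{d})$, so that $S=\phi(\mathfrak{a})\bigl(\prod_\mathfrak{p}J_\mathfrak{p}\bigr)\,\sigma_s(\mathfrak{a}/\mathfrak{d})$. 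A final multiplicative bookkeeping—splitting the primes into those dividing $\mathfrak{d}$, where $\mathfrak{a}_0$ collects the full $\mathfrak{p}$-parts and each contributes $-N(\mathfrak{p})^{2k-2}(N(\mathfrak{p})-1)$, matching $\mu(\mathfrak{d})\phi(\mathfrak{a}_0^2/\mathfrak{d})$, and those not dividing $\mathfrak{d}$, each contributing $\phi_2(\mathfrak{p}^k)$, matching $\phi_2(\mathfrak{a}/\mathfrak{a}_0)$—identifies $\phi(\mathfrak{a})\prod_\mathfrak{p}J_\mathfrak{p}$ with $\mu(\mathfrak{d})\phi(\mathfrak{a}_0^2/\mathfrak{d})\phi_2(\mathfrak{a}/\mathfrak{a}_0)$ and finishes the proof. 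I expect the main obstacle to be the local character-sum evaluation, and in particular the case analysis in $c_\mathfrak{p}$ that simultaneously produces the vanishing when $\mathfrak{d}$ is non-squarefree and the precise constants; the remaining steps are routine bookkeeping with the Chinese Remainder Theorem and the divisor sum.
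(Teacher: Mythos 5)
Your argument is correct, and it takes a genuinely different route from the paper. The paper first proves that the full sum $\mathcal{S}_{\chi}(\mathfrak{a},s)$ is multiplicative in $\mathfrak{a}$, reduces to $\mathfrak{a}=\mathfrak{p}^m$, and there stratifies by the exact power $\gcd(b_1,\ldots,b_s,\mathfrak{p}^m)=\mathfrak{p}^r$ and by the exact power of $\mathfrak{p}$ dividing $a_1+a_2-1$; the resulting inner character sums are taken from three lemmas of Ji--Wang (cited without proof) and then recombined through two telescoping computations (Lemma \ref{lfs1} and Theorem \ref{mtp}). You instead linearize at the outset via $N(\mathfrak{b})=\sum_{\mathfrak{e}\mid\mathfrak{b}}\phi(\mathfrak{e})$, collapse the $b$-sum to $(N(\mathfrak{a})/N(\mathfrak{e}))^s$ in one stroke, and reduce everything to the single sum $T(\mathfrak{e})$, which your substitution $u=a_1+a_2$, $a_1=ut$ splits cleanly as $U_\mathfrak{p}(j)\,J_\mathfrak{p}$ --- this substitution is the real gain, since it makes the paper's Lemmas \ref{fl}--\ref{f3} essentially immediate (the $j$-dependence sits entirely in the subgroup sum $U_\mathfrak{p}(j)$, and $J_\mathfrak{p}$ is a fixed Jacobi-type sum evaluated by one orthogonality argument). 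The cancellation of $\phi(\mathfrak{e})$ against $\phi(\mathfrak{a})/\phi(\mathfrak{e})\cdot\phi(\mathfrak{e})$, the reindexing giving $\sigma_s(\mathfrak{a}/\mathfrak{d})$, and the final identification of $\phi(\mathfrak{a})\prod_\mathfrak{p}J_\mathfrak{p}$ with $\mu(\mathfrak{d})\phi(\mathfrak{a}_0^2/\mathfrak{d})\phi_2(\mathfrak{a}/\mathfrak{a}_0)$ all check out, including the degenerate cases ($N(\mathfrak{p})=2$, where both $J_\mathfrak{p}$ and $\phi_2(\mathfrak{p}^k)$ vanish, and non-squarefree $\mathfrak{d}$, where $J_\mathfrak{p}=0$ matches $\mu(\mathfrak{d})=0$). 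What your route buys is a shorter, self-contained proof with no exact-gcd stratification and no telescoping; what the paper's route buys is a direct reuse of the published Ji--Wang lemmas and an explicit prime-power statement (Theorem \ref{mtp}) as an intermediate result. To turn your sketch into a full proof you need only write out the orthogonality evaluations of $U_\mathfrak{p}(j)$ and $J_\mathfrak{p}$ in the three conductor cases, which you have correctly identified as the only nontrivial step.
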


\begin{remark}
If $s=0$, then the identity \eqref{meq} reduces to Ji and Wang's identity \eqref{wj}. Furthermore, if $\chi$ is the trivial character $\chi_{0}$, then \eqref{meq} reduces to
\begin{equation}\label{meq2}
\sum_{\substack{1 \leq a_1, a_2, b_1,b_2,\ldots, b_s \leq n \\ \gcd(a_1a_2,n)=1 \\ \gcd(a_1+a_2,n)=1}} \gcd(a_1+a_2-1,b_1,b_2,\ldots, b_s,n) =  \phi_2(n)\sigma_s(n),
\end{equation}
which is a generalization of Sita Ramaiah's identity \eqref{sr}.
\end{remark}



\section{Preliminaries}
In this section, we fix an algebraic number field $K$ with ring of integers $\mathcal{O}_{K}$ and a non-zero ideal $\mathfrak{a}$ in $\mathcal{O}_{K}$. We first assume that $\mathfrak{a}=\mathfrak{p}^m$, where $\mathfrak{p}$ is a prime ideal and $m \geq 1$ is an integer. Let $\chi$ be a Dirichlet character modulo $\mathfrak{a}$ with conductor $\mathfrak{d}=\mathfrak{p}^t$, where $t$ is an integer with $0 \leq t \leq m$.

Since $\mathfrak{a}=\mathfrak{p}^m$ is a prime power, we note that the additive group $\mathcal{O}_{K}/\mathfrak{a}$ has the following chain of subgroups.
$$0=\mathfrak{p}^m\mathcal{O}_{K}/\mathfrak{a} \subsetneq \mathfrak{p}^{m-1}\mathcal{O}_{K}/\mathfrak{a} \subsetneq \ldots \subsetneq \mathfrak{p}\mathcal{O}_{K}/\mathfrak{a} \subsetneq \mathcal{O}_{K}/\mathfrak{a}.$$

Similarly the multiplicative group $(\mathcal{O}_{K}/\mathfrak{a})^*$ has a filtration consisting of multiplicative subgroups: 
$$1=1+\mathfrak{p}^m\mathcal{O}_{K}/\mathfrak{a} \subsetneq 1+\mathfrak{p}^{m-1}\mathcal{O}_{K}/\mathfrak{a} \subsetneq \ldots \subsetneq 1+\mathfrak{p}\mathcal{O}_{K}/\mathfrak{a} \subsetneq (\mathcal{O}_{K}/\mathfrak{a})^*,$$ with $|1+\mathfrak{p}^k\mathcal{O}_{K}/\mathfrak{a}|=N(\mathfrak{p})^{m-k}$ for $1\leq k \leq m$.  


Now, we consider 
\begin{eqnarray}\label{eqn}
\mathcal{S}_{\chi}(\mathfrak{p}^m,s) & = &\sum_{\substack{ a_1, a_2, b_1,b_2,\ldots, b_s \in \mathcal{O}_{K}/\mathfrak{a} \\ \gcd(a_1a_2,\mathfrak{p}^m)=1 \\ \gcd(a_1+a_2,\mathfrak{p}^m)=1}} N(\gcd(a_1+a_2-1,b_1,b_2,\ldots, b_s,\mathfrak{p}^m))\chi(a_1)\nonumber \\
&=& \sum_{r=0}^m \sum_{\substack{ b_1,b_2,\ldots, b_s \in \mathcal{O}_{K}/\mathfrak{a} \\ \gcd(b_1,b_2,\ldots, b_s,\mathfrak{p}^m) = \mathfrak{p}^r}}\sum_{\substack{ a_1,a_2 \in \mathcal{O}_{K}/\mathfrak{a} \\ \gcd(a_1a_2,\mathfrak{p}^m)=1 \\ \gcd(a_1+a_2,\mathfrak{p}^m)=1}} N(\gcd(a_1+a_2-1, \mathfrak{p}^r)) \chi(a_1) \nonumber \\
&=& \sum_{r=0}^m \left(\sum_{\substack{ a_1,a_2 \in \mathcal{O}_{K}/\mathfrak{a}  \\ \gcd(a_1a_2,\mathfrak{p}^m)=1 \\ \gcd(a_1+a_2,\mathfrak{p}^m)=1}} N(\gcd(a_1+a_2-1, \mathfrak{p}^r)) \chi(a_1)\right) \left(\sum_{\substack{ b_1,b_2,\ldots, b_s \in \mathcal{O}_{K}/\mathfrak{a} \\ \gcd(b_1,b_2,\ldots, b_s,\mathfrak{p}^m) = \mathfrak{p}^r}} 1\right)
\end{eqnarray}

Thus to prove Theorem \ref{MAIN-TH}, we need to compute the sums 

$$\sum_{\substack{ a_1,a_2 \in \mathcal{O}_{K}/\mathfrak{a} \\ \gcd(a_1a_2,\mathfrak{p}^m)=1 \\ \gcd(a_1+a_2,\mathfrak{p}^m)=1}} N(\gcd(a_1+a_2-1, \mathfrak{p}^r)) \chi(a_1) \; \text{ and } \; \sum_{\substack{ b_1,b_2,\ldots, b_s \in \mathcal{O}_{K}/\mathfrak{a} \\ \gcd(b_1,b_2,\ldots, b_s,\mathfrak{p}^m) = \mathfrak{p}^r}} 1.$$ 


Next, we record some lemmas that will be used to compute the sum $\mathcal{S}_{\chi}(\mathfrak{p}^{m},s)$.

\begin{lemma}\cite{wj}\label{fl}
Let $\mathfrak{a}=\mathfrak{p}^m$ and let $\chi$ be a Dirichlet character modulo $\mathfrak{a}$ with conductor $\mathfrak{d}=\mathfrak{p}^t$ with $0 \leq t \leq m$. Then for every integer $1 \leq k \leq m$, we have
\begin{equation*}
\sum_{a \in 1+\mathfrak{p}^k (\mathcal{O}_{K}/\mathfrak{a})^*} \chi(a)=
\begin{cases}
N(\mathfrak{p})^{m-k} & \text{ for } k=t,t+1,\ldots, m \\
0 & \text{ otherwise }.
\end{cases}
\end{equation*}
\end{lemma}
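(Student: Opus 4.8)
The plan is to read the sum as a character sum over a finite abelian group and invoke the orthogonality relations. Write $H_k := 1+\mathfrak{p}^k\mathcal{O}_K/\mathfrak{a}$ for the subgroup of $(\mathcal{O}_K/\mathfrak{a})^*$ appearing in the filtration introduced above, so that $H_m \subsetneq H_{m-1} \subsetneq \cdots \subsetneq H_1 \subsetneq (\mathcal{O}_K/\mathfrak{a})^*$ is a decreasing chain and $|H_k| = N(\mathfrak{p})^{m-k}$. The restriction $\chi|_{H_k}$ is a character of the finite abelian group $H_k$, and by the standard orthogonality relation the sum $\sum_{a\in H_k}\chi(a)$ equals $|H_k| = N(\mathfrak{p})^{m-k}$ when $\chi|_{H_k}$ is trivial and equals $0$ otherwise. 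Thus the whole computation reduces to deciding, for each $k$, whether $\chi$ is trivial on $H_k$.

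To settle that, I would recall the description of the conductor in terms of this filtration. For $\mathfrak{a}=\mathfrak{p}^m$, the subgroup $H_t$ is precisely the kernel of the natural reduction map $(\mathcal{O}_K/\mathfrak{p}^m)^* \to (\mathcal{O}_K/\mathfrak{p}^t)^*$, so the statement that $\chi$ has conductor $\mathfrak{d}=\mathfrak{p}^t$ means exactly that $\chi$ factors through $(\mathcal{O}_K/\mathfrak{p}^t)^*$ but through no smaller quotient; equivalently, $\chi$ is trivial on $H_t$ but nontrivial on $H_{t-1}$, with $t=0$ corresponding to the principal character (trivial on every $H_k$). The case analysis is then immediate: if $k\geq t$ then $H_k\subseteq H_t$ by the decreasing chain, so $\chi$ is trivial on $H_k$ and the sum is $N(\mathfrak{p})^{m-k}$; if $1\leq k\leq t-1$ then $H_{t-1}\subseteq H_k$, and since $\chi$ is already nontrivial on $H_{t-1}$ it is \emph{a fortiori} nontrivial on the larger group $H_k$, so the sum vanishes. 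This yields exactly the stated two-case formula.

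The one point requiring care — and essentially the only place where the number-field setting enters beyond formal group theory — is the identification of $H_t$ with the kernel of reduction modulo $\mathfrak{p}^t$ together with the resulting conductor criterion. The main obstacle is therefore not the character sum itself, which is pure orthogonality, but verifying that the abstract notion of conductor of a Dirichlet character over $\mathcal{O}_K$ coincides with the concrete description ``$\chi$ trivial on $1+\mathfrak{p}^t\mathcal{O}_K/\mathfrak{a}$ but not on $1+\mathfrak{p}^{t-1}\mathcal{O}_K/\mathfrak{a}$''; once that correspondence is in place, the order formula $|H_k|=N(\mathfrak{p})^{m-k}$ finishes the proof.
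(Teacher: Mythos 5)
Your argument is correct: the lemma is exactly the orthogonality relation for the restriction of $\chi$ to the subgroup $H_k=1+\mathfrak{p}^k\mathcal{O}_K/\mathfrak{a}$ of order $N(\mathfrak{p})^{m-k}$, combined with the characterization of the conductor $\mathfrak{p}^t$ as the smallest modulus through which $\chi$ factors, i.e.\ $\chi$ trivial on $H_t$ but not on $H_{t-1}$. Note that the paper itself offers no proof of this statement --- it is quoted from Ji and Wang \cite{wj} --- so there is nothing to diverge from; your write-up supplies the standard argument, and you correctly isolate the only nontrivial input, namely that $H_t$ is the kernel of the reduction map $(\mathcal{O}_K/\mathfrak{p}^m)^*\to(\mathcal{O}_K/\mathfrak{p}^t)^*$ (with the $t=0$ case read as the principal character).
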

%
%

\begin{lemma} \cite{wj} \label{f2}
Let $\mathfrak{a}=\mathfrak{p}^m$ and $\chi$ a Dirichlet character modulo $\mathfrak{a}$ with conductor $\mathfrak{d}=\mathfrak{p}^t$, for some $0 \leq t \leq m$. Then for an integer $k$ with $1\leq k \leq m$, we have
\begin{equation*}
\sum_{\substack{a_{1}, a_{2} \in (\mathcal{O}_{K}/\mathfrak{a})^{*} \\ a_1+a_2\equiv 1 \pmod {\mathfrak{p}^k}}} \chi(a_1)=
\begin{cases}
    N(\mathfrak{p})^{m-k}(N(\mathfrak{p})^m-2N(\mathfrak{p})^{m-1})   & ~ \text{ if }t=0\\
    -N(\mathfrak{p})^{2m-k-1} &  ~ \text{ if }t=1\\
    0 & ~ \text{ otherwise. }
\end{cases}
\end{equation*}
\end{lemma}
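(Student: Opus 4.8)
The plan is to evaluate the sum by first carrying out the inner summation over $a_2$ for a fixed unit $a_1$, thereby collapsing the double sum into a single character sum over $a_1$, and then to invoke Lemma \ref{fl} together with the orthogonality relations for characters.

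First I would fix $a_1 \in (\mathcal{O}_{K}/\mathfrak{a})^{*}$ and count the admissible values of $a_2$. The constraint $a_1 + a_2 \equiv 1 \pmod{\mathfrak{p}^k}$ forces $a_2 \equiv 1 - a_1 \pmod{\mathfrak{p}^k}$, and since the fibre of the reduction $\mathcal{O}_{K}/\mathfrak{p}^m \to \mathcal{O}_{K}/\mathfrak{p}^k$ has cardinality $N(\mathfrak{p})^{m-k}$, there are exactly $N(\mathfrak{p})^{m-k}$ lifts of $1 - a_1$ to $\mathcal{O}_{K}/\mathfrak{p}^m$. Because $k \geq 1$, all of these lifts are congruent to $1 - a_1$ modulo $\mathfrak{p}$, so they are simultaneously units or non-units: if $a_1 \equiv 1 \pmod{\mathfrak{p}}$ then $1 - a_1 \in \mathfrak{p}$ and none of the lifts is a unit, whereas if $a_1 \not\equiv 1 \pmod{\mathfrak{p}}$ then every lift is a unit. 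Hence the inner sum over $a_2$ contributes the weight $N(\mathfrak{p})^{m-k}$ precisely when $a_1 \not\equiv 1 \pmod{\mathfrak{p}}$ and $0$ otherwise, giving
\begin{equation*}
\sum_{\substack{a_{1}, a_{2} \in (\mathcal{O}_{K}/\mathfrak{a})^{*} \\ a_1+a_2\equiv 1 \pmod {\mathfrak{p}^k}}} \chi(a_1) = N(\mathfrak{p})^{m-k}\sum_{\substack{a_1 \in (\mathcal{O}_{K}/\mathfrak{a})^{*} \\ a_1 \not\equiv 1 \pmod{\mathfrak{p}}}} \chi(a_1).
\end{equation*}

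Next I would write the remaining character sum as the difference
\begin{equation*}
\sum_{a_1 \in (\mathcal{O}_{K}/\mathfrak{a})^{*}} \chi(a_1) \;-\; \sum_{\substack{a_1 \in (\mathcal{O}_{K}/\mathfrak{a})^{*} \\ a_1 \equiv 1 \pmod{\mathfrak{p}}}} \chi(a_1),
\end{equation*}
and evaluate each piece. By orthogonality the first sum equals $\phi(\mathfrak{p}^m) = N(\mathfrak{p})^m - N(\mathfrak{p})^{m-1}$ when $\chi$ is trivial (that is, $t=0$) and vanishes when $\chi$ is non-trivial (that is, $t \geq 1$). The second sum runs over the subgroup $1 + \mathfrak{p}\mathcal{O}_{K}/\mathfrak{a}$, so it is exactly the sum treated in Lemma \ref{fl} with $k=1$, namely $N(\mathfrak{p})^{m-1}$ when $t \leq 1$ and $0$ when $t \geq 2$.

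Finally I would assemble the three cases. For $t=0$ one obtains $N(\mathfrak{p})^{m-k}\big((N(\mathfrak{p})^m - N(\mathfrak{p})^{m-1}) - N(\mathfrak{p})^{m-1}\big) = N(\mathfrak{p})^{m-k}(N(\mathfrak{p})^m - 2N(\mathfrak{p})^{m-1})$; for $t=1$ one obtains $N(\mathfrak{p})^{m-k}(0 - N(\mathfrak{p})^{m-1}) = -N(\mathfrak{p})^{2m-k-1}$; and for $t \geq 2$ both pieces vanish, yielding $0$. These agree with the three claimed values. The step I expect to require the most care is the unit-counting in the inner sum: one must argue cleanly that, for $k \geq 1$, the $N(\mathfrak{p})^{m-k}$ lifts of $1-a_1$ are either all units or all non-units according to the residue of $a_1$ modulo $\mathfrak{p}$, so that the unit condition on $a_2$ collapses to the single condition $a_1 \not\equiv 1 \pmod{\mathfrak{p}}$ on $a_1$; everything after that is a direct application of Lemma \ref{fl} and character orthogonality.
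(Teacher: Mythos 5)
Your argument is correct. Note that the paper itself gives no proof of this lemma --- it is quoted from \cite{wj} without demonstration --- so there is no in-paper argument to compare against; judged on its own, your derivation is sound and complete. The one delicate step, collapsing the unit condition on $a_2$ to the condition $a_1 \not\equiv 1 \pmod{\mathfrak{p}}$ on $a_1$, is handled correctly: since $k \geq 1$, all $N(\mathfrak{p})^{m-k}$ lifts of $1-a_1$ share the same residue modulo $\mathfrak{p}$ and hence are all units or all non-units together. The remaining evaluation via orthogonality and Lemma \ref{fl} with $k=1$ (noting that $\{a \in (\mathcal{O}_K/\mathfrak{a})^* : a \equiv 1 \pmod{\mathfrak{p}}\}$ is exactly the subgroup $1+\mathfrak{p}\mathcal{O}_K/\mathfrak{a}$, whose character sum is $N(\mathfrak{p})^{m-1}$ for $t \leq 1$ and $0$ for $t \geq 2$) reproduces all three cases of the claimed formula.
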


\begin{lemma} \cite{wj}\label{f3}
Let $\mathfrak{a}=\mathfrak{p}^m$ and $\chi$ a Dirichlet character modulo $\mathfrak{a}$ with conductor $\mathfrak{d}=\mathfrak{p}^t$, for some $0 \leq t \leq m$. Then we have
\begin{equation*}
\sum_{\substack{a_{1}, a_{2} \in (\mathcal{O}_{K}/\mathfrak{a})^{*} \\ \gcd(a_1+a_2,\mathfrak{p}^m) = 1\\ \gcd(a_1+a_2-1,\mathfrak{p}^m)=1}} \chi(a_1)=
\begin{cases}
    (N(\mathfrak{p})^m-N(\mathfrak{p})^{m-1}) (N(\mathfrak{p})^m-3N(\mathfrak{p})^{m-1})+N(\mathfrak{p})^{2m-2}  & ~ \text{ if }t=0\\
    N(\mathfrak{p})^{2m-2} &  ~ \text{ if }t=1\\
    0 & ~ \text{ otherwise. }
\end{cases}
\end{equation*}
\end{lemma}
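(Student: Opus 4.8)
The plan is to reduce the desired sum, call it $T$, to a piece already evaluated in Lemma \ref{f2} together with one short auxiliary computation. Throughout, write $q = N(\mathfrak{p})$ for brevity. Since $\mathfrak{a} = \mathfrak{p}^m$ is a prime power, for any $x \in \mathcal{O}_{K}$ we have $\gcd(x,\mathfrak{p}^m)=1$ if and only if $x \not\equiv 0 \pmod{\mathfrak{p}}$. In particular, the constraint $\gcd(a_1+a_2-1,\mathfrak{p}^m)=1$ fails exactly when $a_1+a_2 \equiv 1 \pmod{\mathfrak{p}}$, and in that case $a_1+a_2$ is automatically a unit. This is the key observation that lets me split off the awkward second condition.

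First I would introduce
\[
U = \sum_{\substack{a_1, a_2 \in (\mathcal{O}_{K}/\mathfrak{a})^{*} \\ \gcd(a_1+a_2,\mathfrak{p}^m)=1}} \chi(a_1), \qquad V = \sum_{\substack{a_1, a_2 \in (\mathcal{O}_{K}/\mathfrak{a})^{*} \\ a_1+a_2 \equiv 1 \pmod{\mathfrak{p}}}} \chi(a_1).
\]
Partitioning the index set of $U$ according to whether $a_1+a_2 \equiv 1 \pmod{\mathfrak{p}}$, and using the observation above that this congruence forces $a_1+a_2$ to be a unit, gives $T = U - V$. The sum $V$ is precisely the $k=1$ case of Lemma \ref{f2}, hence is already computed: it equals $q^{m-1}(q^m-2q^{m-1})$ when $t=0$, equals $-q^{2m-2}$ when $t=1$, and vanishes when $t \geq 2$.

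It then remains to evaluate $U$ directly. For a fixed unit $a_1$, the number of $a_2$ in $\mathcal{O}_{K}/\mathfrak{p}^m$ that are units with $a_1+a_2$ also a unit is the number of residues avoiding the two \emph{distinct} classes $0$ and $-a_1$ modulo $\mathfrak{p}$, namely $q^m - 2q^{m-1}$, independent of $a_1$. Thus $U = (q^m-2q^{m-1})\sum_{a_1 \in (\mathcal{O}_{K}/\mathfrak{a})^{*}} \chi(a_1)$, and by orthogonality of characters on the finite group $(\mathcal{O}_{K}/\mathfrak{a})^{*}$ the remaining character sum is $\phi(\mathfrak{p}^m)=q^m-q^{m-1}$ when $\chi$ is trivial ($t=0$) and $0$ when $\chi$ is non-trivial ($t \geq 1$).

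Substituting into $T = U - V$ finishes the proof case by case: for $t \geq 2$ both terms vanish; for $t=1$ one gets $T = 0 - (-q^{2m-2}) = q^{2m-2}$; and for $t=0$ one obtains $(q^m-2q^{m-1})(q^m-q^{m-1}) - q^{m-1}(q^m-2q^{m-1}) = (q^m-2q^{m-1})^2$. I expect the only genuine points requiring care to be the reduction step — namely justifying that the congruence condition defining $V$ subsumes the unit condition on $a_1+a_2$, so that $V$ is \emph{exactly} the Lemma \ref{f2} sum — and the final bookkeeping in the $t=0$ case, where one must expand to check $(q^m-2q^{m-1})^2 = (q^m-q^{m-1})(q^m-3q^{m-1}) + q^{2m-2}$. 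Everything else is a one-line count plus character orthogonality.
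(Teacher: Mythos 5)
Your argument is correct. The paper itself gives no proof of this lemma --- it is quoted from \cite{wj} --- so there is nothing to compare against line by line, but your reduction is sound: the complement of the condition $\gcd(a_1+a_2-1,\mathfrak{p}^m)=1$ is exactly $a_1+a_2\equiv 1\pmod{\mathfrak{p}}$, which indeed forces $a_1+a_2$ to be a unit, so $T=U-V$ with $V$ precisely the $k=1$ instance of Lemma \ref{f2}; the count $N(\mathfrak{p})^m-2N(\mathfrak{p})^{m-1}$ for $U$ is right because the two excluded residue classes $0$ and $-a_1$ modulo $\mathfrak{p}$ are distinct for a unit $a_1$ (and the formula harmlessly gives $0$ when $N(\mathfrak{p})=2$, as it should); and the final identity $(N(\mathfrak{p})^m-2N(\mathfrak{p})^{m-1})^2=(N(\mathfrak{p})^m-N(\mathfrak{p})^{m-1})(N(\mathfrak{p})^m-3N(\mathfrak{p})^{m-1})+N(\mathfrak{p})^{2m-2}$ checks out. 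This is essentially the inclusion--exclusion one would expect the cited source to use, and it is consistent with how the paper itself combines Lemmas \ref{f2} and \ref{f3} in the proof of Lemma \ref{lfs1}.
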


The next two lemmas are of utmost importance in the proof of Theorem \ref{MAIN-TH} and we give detailed proofs here.

\begin{lemma} \label{lfs1}
Let $\mathfrak{a}=\mathfrak{p}^m$ and $\chi$ a Dirichlet character modulo $\mathfrak{a}$ with conductor $\mathfrak{d}=\mathfrak{p}^t$, where $0 \leq t \leq m$. Let $r$ be an integer such that $0 \leq r \leq m$. Then
\begin{equation}\label{elfs1}
\sum_{\substack{ a_1,a_2 \in (\mathcal{O}_{K}/\mathfrak{a})^{*} \\ \gcd(a_1+a_2,\mathfrak{p}^m)=1}} N(\gcd(a_1+a_2-1, \mathfrak{p}^r))\chi(a_1)=
\begin{cases}
    (r+1)\phi_2(\mathfrak{p}^m)   & ~ \text{ if }t=0\\
    -r\phi(\mathfrak{p}^{2m-1}) & ~  \text{ if }t=1\\
    0 & ~ \text{ otherwise. }
\end{cases}
\end{equation}
\end{lemma}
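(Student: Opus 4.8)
The plan is to linearize the norm factor by the divisor identity $N(\mathfrak{b})=\sum_{\mathfrak{e}\mid\mathfrak{b}}\phi(\mathfrak{e})$ and then reduce everything to the character sums already evaluated in Lemma \ref{f2}. Writing $c=a_1+a_2-1$, the only divisors of $\mathfrak{p}^r$ are the ideals $\mathfrak{p}^k$ with $0\le k\le r$, so
$N(\gcd(c,\mathfrak{p}^r))=\sum_{\mathfrak{e}\mid\gcd(\langle c\rangle,\mathfrak{p}^r)}\phi(\mathfrak{e})=\sum_{k=0}^{r}\phi(\mathfrak{p}^k)\,[\,\mathfrak{p}^k\mid\langle c\rangle\,]$. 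Since $\mathfrak{p}^k\mid\langle c\rangle$ is exactly the congruence $a_1+a_2\equiv 1\pmod{\mathfrak{p}^k}$, interchanging the two summations rewrites the left-hand side of \eqref{elfs1} as $\sum_{k=0}^{r}\phi(\mathfrak{p}^k)\,A_k$, where $A_k$ denotes the sum of $\chi(a_1)$ over all $a_1,a_2\in(\mathcal{O}_{K}/\mathfrak{a})^{*}$ subject to $\gcd(a_1+a_2,\mathfrak{p}^m)=1$ and $a_1+a_2\equiv 1\pmod{\mathfrak{p}^k}$.

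Next I would evaluate $A_k$. For $k\ge 1$ the congruence $a_1+a_2\equiv 1\pmod{\mathfrak{p}^k}$ forces $a_1+a_2\not\equiv 0\pmod{\mathfrak{p}}$, so the side condition $\gcd(a_1+a_2,\mathfrak{p}^m)=1$ is automatic and $A_k$ coincides with the sum computed in Lemma \ref{f2}. The term $k=0$ lies outside the range of Lemma \ref{f2} and must be handled directly: here the congruence is vacuous, so $A_0=\sum_{a_1}\chi(a_1)\cdot\#\{a_2\in(\mathcal{O}_{K}/\mathfrak{a})^{*}:a_1+a_2\in(\mathcal{O}_{K}/\mathfrak{a})^{*}\}$. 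For a fixed unit $a_1$ the admissible $a_2$ are those avoiding the two distinct residues $0$ and $-a_1$ modulo $\mathfrak{p}$, a count of $N(\mathfrak{p})^m-2N(\mathfrak{p})^{m-1}$ that is independent of $a_1$; hence $A_0=(N(\mathfrak{p})^m-2N(\mathfrak{p})^{m-1})\sum_{a_1}\chi(a_1)$, which by orthogonality equals $(N(\mathfrak{p})^m-2N(\mathfrak{p})^{m-1})\phi(\mathfrak{p}^m)$ when $t=0$ and $0$ when $t\ge 1$.

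Finally I would assemble the three cases, exploiting the pleasant feature that $\phi(\mathfrak{p}^k)A_k$ is independent of $k$. Using $\phi(\mathfrak{p}^k)=N(\mathfrak{p})^k-N(\mathfrak{p})^{k-1}$ together with Lemma \ref{f2}, for $t=0$ one gets $\phi(\mathfrak{p}^k)A_k=\phi(\mathfrak{p}^m)(N(\mathfrak{p})^m-2N(\mathfrak{p})^{m-1})$ for every $0\le k\le r$ (the value at $k=0$ matching by the direct computation above), so the sum consists of $r+1$ equal terms; the identity $\phi_2(\mathfrak{p}^m)=\phi(\mathfrak{p}^m)(N(\mathfrak{p})^m-2N(\mathfrak{p})^{m-1})$, which follows from the Remark, then gives $(r+1)\phi_2(\mathfrak{p}^m)$. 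For $t=1$ one gets $\phi(\mathfrak{p}^k)A_k=-\phi(\mathfrak{p}^{2m-1})$ for each $1\le k\le r$ while $A_0=0$, so exactly $r$ terms survive and the total is $-r\phi(\mathfrak{p}^{2m-1})$. For $t\ge 2$ every $A_k$ vanishes and the sum is $0$.

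The main obstacle is conceptual rather than computational: one must notice that the boundary term $k=0$ falls outside Lemma \ref{f2} and carries the coprimality condition in an essential way, and that it is precisely the vanishing or non-vanishing of $A_0$ that distinguishes the coefficient $r+1$ in the case $t=0$ from the coefficient $r$ in the case $t=1$. The remaining work---checking the $k$-independence of $\phi(\mathfrak{p}^k)A_k$ and the algebraic identity for $\phi_2(\mathfrak{p}^m)$---is routine simplification.
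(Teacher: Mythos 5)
Your proof is correct, and it takes a mildly but genuinely different route from the paper's. The paper stratifies the sum by the exact power $\gcd(a_1+a_2-1,\mathfrak{p}^r)=\mathfrak{p}^i$ and then telescopes the resulting indicator conditions into congruences $a_1+a_2\equiv 1\pmod{\mathfrak{p}^i}$; this produces the same congruence sums $A_k$ that you obtain, but with the boundary contribution expressed as the sum over $\gcd(a_1+a_2-1,\mathfrak{p}^m)=1$, which the paper must evaluate separately via Lemma \ref{f3}. Your linearization $N(\gcd(c,\mathfrak{p}^r))=\sum_{k=0}^{r}\phi(\mathfrak{p}^k)\,[\,\mathfrak{p}^k\mid\langle c\rangle\,]$ is the classical Menon device; it reaches the weighted congruence sums in one step and replaces the appeal to Lemma \ref{f3} by the elementary count of $A_0$ (which for trivial $\chi$ is just $\phi_2(\mathfrak{p}^m)$ by definition). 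The two computations are Abel-summation rearrangements of one another, so the arithmetic content is identical, but your version is shorter, needs only Lemma \ref{f2} as external input, and makes transparent why the coefficient drops from $r+1$ to $r$ between the cases $t=0$ and $t=1$: it is exactly the vanishing of $A_0$ for nontrivial $\chi$. The individual steps all check out --- the observation that $k\geq 1$ forces $\gcd(a_1+a_2,\mathfrak{p}^m)=1$ so that $A_k$ coincides with the sum of Lemma \ref{f2}, the count $N(\mathfrak{p})^m-2N(\mathfrak{p})^{m-1}$ of admissible $a_2$ for fixed $a_1$, the $k$-independence of $\phi(\mathfrak{p}^k)A_k$ in each case, and the identity $\phi_2(\mathfrak{p}^m)=\phi(\mathfrak{p}^m)\bigl(N(\mathfrak{p})^m-2N(\mathfrak{p})^{m-1}\bigr)$.
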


\begin{proof}
We denote the sum in the LHS of \eqref{elfs1} by $\mathcal{S}$. Then
\begin{eqnarray*}
\mathcal{S} &=& \sum_{i=0}^m N(\mathfrak{p}^i) \sum_{\substack{a_1,a_2 \in (\mathcal{O}_{K}/\mathfrak{a})^{*} \\ \gcd(a_1+a_2,\mathfrak{p}^m)=1 \\\gcd(a_1+a_2-1,\mathfrak{p}^r)=\mathfrak{p}^i}} \chi(a_1) \\
&=&\sum_{\substack{a_1,a_2 \in (\mathcal{O}_{K}/\mathfrak{a})^{*} \\ \gcd(a_1+a_2,\mathfrak{p}^m)=1\\\gcd(a_1+a_2-1,\mathfrak{p}^m)=1}} \chi(a_1) \;+\; \sum_{i=1}^{r-1} N(\mathfrak{p})^i \sum_{\substack{a_1,a_2 \in (\mathcal{O}_{K}/\mathfrak{a})^{*} \\ \gcd(a_1+a_2,\mathfrak{p}^m)=1 \\ \gcd(a_1+a_2-1,\mathfrak{p}^r)=\mathfrak{p}^i}} \chi(a_1) \;+\; N(\mathfrak{p})^r \sum_{i=r}^m\sum_{\substack{a_1,a_2 \in (\mathcal{O}_{K}/\mathfrak{a})^{*} \\ \gcd(a_1+a_2,\mathfrak{p}^m)=1 \\ \gcd(a_1+a_2-1,\mathfrak{p}^r)=\mathfrak{p}^r}} \chi(a_1) \\
&=& \sum_{\substack{a_1,a_2 \in (\mathcal{O}_{K}/\mathfrak{a})^{*} \\ \gcd(a_1+a_2,\mathfrak{p}^m)=1\\ \gcd(a_1+a_2-1,\mathfrak{p}^m)=1}} \chi(a_1) \; +\; \sum_{i=1}^{r-1} N(\mathfrak{p})^i \sum_{\substack{a_1,a_2 \in (\mathcal{O}_{K}/\mathfrak{a})^{*} \\ \gcd(a_1+a_2,\mathfrak{p}^m)=1 \\ \gcd(a_1+a_2-1,\mathfrak{p}^r)=\mathfrak{p}^i}} \chi(a_1) \;+\; N(\mathfrak{p})^r \sum_{\substack{a_1,a_2 \in (\mathcal{O}_{K}/\mathfrak{a})^{*} \\ a_1+a_2 \in 1 + \mathfrak{p}^r/\mathfrak{p}^{m}}} \chi(a_1) \\
&=& \sum_{\substack{a_1,a_2 \in (\mathcal{O}_{K}/\mathfrak{a})^{*} \\ \gcd(a_1+a_2,\mathfrak{p}^m)=1\\
\gcd(a_1+a_2-1,\mathfrak{p}^m)=1}} \chi(a_1) \; +\; \sum_{i=1}^{r-1} N(\mathfrak{p})^i \Bigg(\sum_{\substack{ a_1,a_2 \in (\mathcal{O}_{K}/\mathfrak{a})^{*} \\a_1+a_2 \in 1+\mathfrak{p}^i/\mathfrak{p}^{m}}} \chi(a_1) 
- \sum_{\substack{ a_1,a_2 \in (\mathcal{O}_{K}/\mathfrak{a})^{*} \\a_1+a_2 \in 1+\mathfrak{p}^{i+1}/\mathfrak{p}^{m}}} \chi(a_1)\Bigg) \\ &+& N(\mathfrak{p})^r \sum_{\substack{ a_1,a_2 \in (\mathcal{O}_{K}/\mathfrak{a})^{*} \\a_1+a_2 \in 1 + \mathfrak{p}^r/\mathfrak{p}^{m}}} \chi(a_1) \\
&=& \sum_{\substack{a_1,a_2 \in (\mathcal{O}_{K}/\mathfrak{a})^{*} \\ \gcd(a_1+a_2,\mathfrak{p}^m)=1\\ \gcd(a_1+a_2-1,\mathfrak{p}^m)=1}} \chi(a_1) \; +\; \sum_{i=1}^{r} N(\mathfrak{p})^i \sum_{\substack{ a_1,a_2 \in (\mathcal{O}_{K}/\mathfrak{a})^{*} \\a_1+a_2 \in 1+\mathfrak{p}^i/\mathfrak{p}^{m}}} \chi(a_1) - \sum_{i=1}^{r-1} N(\mathfrak{p})^i \sum_{\substack{ a_1,a_2 \in (\mathcal{O}_{K}/\mathfrak{a})^{*} \\a_1+a_2 \in 1+\mathfrak{p}^{i+1}/\mathfrak{p}^{m}}} \chi(a_1)
\end{eqnarray*}


{\bf Case 1. $t=0$}. By using Lemma \ref{f2} and Lemma \ref{f3}, we have
\begin{eqnarray*}
\mathcal{S} &=& (N(\mathfrak{p})^m-N(\mathfrak{p})^{m-1})(N(\mathfrak{p})^m-3N(\mathfrak{p})^{m-1})+N(\mathfrak{p})^{2m-2} \\ &+& \sum_{i=1}^r N(\mathfrak{p})^i N(\mathfrak{p})^{m-i}(N(\mathfrak{p})^m-2N(\mathfrak{p})^{m-1}) - \sum_{i=1}^{r-1} N(\mathfrak{p})^i N(\mathfrak{p})^{m-i-1}(N(\mathfrak{p})^m-2N(\mathfrak{p})^{m-1}) \\
&=& (N(\mathfrak{p})^m-N(\mathfrak{p})^{m-1})(N(\mathfrak{p})^m-3N(\mathfrak{p})^{m-1})+N(\mathfrak{p})^{2m-2} + r N(\mathfrak{p})^m(N(\mathfrak{p})^m-2N(\mathfrak{p})^{m-1})\\ &-& (r-1)N(\mathfrak{p})^{m-1}(N(\mathfrak{p})^m-2N(\mathfrak{p})^{m-1}) \\
&=& (r+1)(N(\mathfrak{p})^m-N(\mathfrak{p})^{m-1})(N(\mathfrak{p})^m-2N(\mathfrak{p})^{m-1}) \\
&=& (r+1)(N(\mathfrak{p})^m-N(\mathfrak{p})^{m-1})^2 -(r+1)(N(\mathfrak{p})^m-N(\mathfrak{p})^{m-1})N(\mathfrak{p})^{m-1} \\
&=& (r+1) \phi(\mathfrak{p}^m)^2 \left(1-\frac{N(\mathfrak{p})^{m-1}}{N(\mathfrak{p})^m-N(\mathfrak{p})^{m-1}}\right) \\
&=& (r+1)\phi(\mathfrak{p}^m)^2 \left(1-\frac{1}{N(\mathfrak{p})-1}\right) \\
&=& (r+1)\phi_2(\mathfrak{p}^m)
\end{eqnarray*}

{\bf Case 2. $t=1$}. Again, by using Lemma \ref{f2} and Lemma \ref{f3}, we see that
\begin{eqnarray*}
\mathcal{S} &=& N(\mathfrak{p})^{2m-2}-\sum_{i=1}^r N(\mathfrak{p})^i N(\mathfrak{p})^{2m-i-1}+ \sum_{i=1}^{r-1} N(\mathfrak{p})^i N(\mathfrak{p})^{2m-i-2} \\
&=& N(\mathfrak{p})^{2m-2} - r N(\mathfrak{p})^{2m-1}+(r-1)N(\mathfrak{p})^{2m-2} \\
&=& -r(N(\mathfrak{p})^{2m-1}-N(\mathfrak{p})^{2m-2}) \\
&=& -r \phi(\mathfrak{p}^{2m-1})
\end{eqnarray*}

{\bf Case 3. $t\geq 2$}. In this case also, by using Lemma \ref{f2} and Lemma \ref{f3}, we get $\mathcal{S}=0$.

This completes the proof of the lemma.
\end{proof}

\begin{lemma}\label{lfs2}
Let $\mathfrak{a}=\mathfrak{p}^m$ and let $r$ be an integer with $0 \leq r \leq m$. Then for every integer $s \geq 0$, we have
\begin{equation*}
\displaystyle\sum_{\substack{ b_1,b_2,\ldots, b_s \in \mathcal{O}_{K}/\mathfrak{a} \\ \gcd(b_1,b_2,\ldots, b_s,\mathfrak{p}^m) = \mathfrak{p}^r}} 1 =
\begin{cases}
N(\mathfrak{p})^{(m-r)s}-N(\mathfrak{p})^{(m-r-1)s} & \text{ when } r<m \\
1 & \text{ when } r=m.
\end{cases}
\end{equation*}
\end{lemma}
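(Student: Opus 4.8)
The plan is to count, for each fixed value $r$ with $0 \le r \le m$, the number of $s$-tuples $(b_1, \ldots, b_s) \in (\mathcal{O}_K/\mathfrak{p}^m)^s$ whose joint $\gcd$ with $\mathfrak{p}^m$ equals exactly $\mathfrak{p}^r$. The key observation is that in the local ring $\mathcal{O}_K/\mathfrak{p}^m$, the condition $\gcd(b_1, \ldots, b_s, \mathfrak{p}^m) = \mathfrak{p}^i$ is equivalent to requiring every $b_j$ to lie in the ideal $\mathfrak{p}^i\mathcal{O}_K/\mathfrak{p}^m$, with at least one $b_j$ not lying in the next ideal $\mathfrak{p}^{i+1}\mathcal{O}_K/\mathfrak{p}^m$. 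Equivalently, $\mathfrak{p}^r \mid \gcd(b_1,\ldots,b_s,\mathfrak{p}^m)$ means all $b_j \in \mathfrak{p}^r\mathcal{O}_K/\mathfrak{p}^m$.

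First I would handle the boundary case $r = m$: then each $b_j$ must lie in $\mathfrak{p}^m\mathcal{O}_K/\mathfrak{p}^m = \{0\}$, so the only tuple is $(0, \ldots, 0)$, giving the count $1$. For the generic case $r < m$, I would count by inclusion. The number of tuples with $\gcd$ divisible by $\mathfrak{p}^r$ (i.e.\ all $b_j \in \mathfrak{p}^r\mathcal{O}_K/\mathfrak{p}^m$) is $\bigl|\mathfrak{p}^r\mathcal{O}_K/\mathfrak{p}^m\bigr|^s$. Using the chain of additive subgroups recorded in the Preliminaries, we have $\bigl|\mathfrak{p}^r\mathcal{O}_K/\mathfrak{p}^m\bigr| = N(\mathfrak{p})^{m-r}$, so this count is $N(\mathfrak{p})^{(m-r)s}$. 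Similarly, the number of tuples with $\gcd$ divisible by $\mathfrak{p}^{r+1}$ (all $b_j \in \mathfrak{p}^{r+1}\mathcal{O}_K/\mathfrak{p}^m$) is $N(\mathfrak{p})^{(m-r-1)s}$.

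Subtracting, the number of tuples whose $\gcd$ is exactly $\mathfrak{p}^r$ — divisible by $\mathfrak{p}^r$ but not by $\mathfrak{p}^{r+1}$ — equals
\begin{equation*}
N(\mathfrak{p})^{(m-r)s} - N(\mathfrak{p})^{(m-r-1)s},
\end{equation*}
which is precisely the claimed value for $r < m$. I expect no serious obstacle here: the argument is a clean finite count once one identifies the cardinalities of the additive quotients $\mathfrak{p}^i\mathcal{O}_K/\mathfrak{p}^m$. The only point requiring mild care is verifying that divisibility of the ideal $\gcd(\langle b_1\rangle, \ldots, \langle b_s\rangle, \mathfrak{p}^m)$ by $\mathfrak{p}^i$ is genuinely equivalent to the simultaneous membership $b_j \in \mathfrak{p}^i\mathcal{O}_K/\mathfrak{p}^m$ for all $j$; this follows because $\mathfrak{p}^i \mid \langle b_j \rangle + \mathfrak{p}^m$ in the local setting means exactly $b_j \in \mathfrak{p}^i\mathcal{O}_K$ modulo $\mathfrak{p}^m$, and the $\gcd$ of ideals is their sum, whose containment in $\mathfrak{p}^i$ is equivalent to each summand lying in $\mathfrak{p}^i$.
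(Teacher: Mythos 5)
Your proof is correct and follows essentially the same route as the paper: identify the tuples with $\gcd$ exactly $\mathfrak{p}^r$ as those lying in $(\mathfrak{p}^r\mathcal{O}_K/\mathfrak{p}^m)^s$ but not in $(\mathfrak{p}^{r+1}\mathcal{O}_K/\mathfrak{p}^m)^s$, then subtract cardinalities using $|\mathfrak{p}^r\mathcal{O}_K/\mathfrak{p}^m| = N(\mathfrak{p})^{m-r}$. The extra remark justifying the equivalence between $\mathfrak{p}^i \mid \gcd(b_1,\ldots,b_s,\mathfrak{p}^m)$ and the membership of each $b_j$ in $\mathfrak{p}^i\mathcal{O}_K/\mathfrak{p}^m$ is a welcome detail that the paper leaves implicit.
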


\begin{proof}
For $0 \leq r \leq m-1$, we note that
$$\gcd(b_1,b_2,\ldots, b_s,\mathfrak{p}^m)=\mathfrak{p}^r \; \text{ if and only if }\; (b_1,b_2,\ldots, b_s) \in (\mathfrak{p}^r/\mathfrak{p}^{m})^s-(\mathfrak{p}^{r+1}/\mathfrak{p}^{m})^s.$$

And for $r=m$,
$$\gcd(b_1,b_2,\ldots, b_s,\mathfrak{p}^m)=\mathfrak{p}^m \; \text{ if and only if }\; (b_1,b_2,\ldots, b_s) \in (\mathfrak{p}^m/\mathfrak{p}^{m})^s.$$

Since $|(\mathfrak{p}^r/\mathfrak{p}^{m})|=N(\mathfrak{p})^{m-r}$ and $\mathfrak{p}^{r}/\mathfrak{p}^{m} \supsetneq \mathfrak{p}^{r + 1}/\mathfrak{p}^{m}$, we get the desired result.
\end{proof}


We now prove the following theorem for a power of a prime ideal.

\begin{theorem}\label{mtp}
Let $\mathfrak{a} = \mathfrak{p}^m$ and $\chi$ be a Dirichlet character modulo $\mathfrak{a}$ with conductor $\mathfrak{d} = \mathfrak{p}^t$, with $0 \leq t \leq m$. Let $s \geq 0$ be an integer. Then we have
\begin{equation}\label{yoman}
\sum_{\substack{ a_1, a_2, b_1,b_2,\ldots, b_s \in \mathcal{O}_{K}/\mathfrak{a} \\ \gcd(a_1a_2,\mathfrak{a})=1 \\ \gcd(a_1+a_2,\mathfrak{a})=1}} N(\gcd(a_1+a_2-1,b_1,b_2,\ldots, b_s,\mathfrak{a}))\chi(a_1) =\mu(\mathfrak{d}) \phi\left(\frac{\mathfrak{a_{0}}^2}{\mathfrak{d}}\right) \phi_2\left(\frac{\mathfrak{a}}{\mathfrak{a}_0}\right)\sigma_s\left(\frac{\mathfrak{a}}{\mathfrak{d}}\right).
\end{equation}
where $\mathfrak{a}_{0}$ is a divisor of $\mathfrak{a}$ such that $\mathfrak{a}_{0}$ has the same set of prime divisors as that of $\mathfrak{d}$ and $\gcd \left(\mathfrak{a}_{0}, \frac{\mathfrak{a}}{\mathfrak{a}_{0}}\right) = 1$ and $\mu$ is the M\"obius function.
\end{theorem}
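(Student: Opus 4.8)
The plan is to start from the factorization \eqref{eqn}, which already expresses $\mathcal{S}_{\chi}(\mathfrak{p}^m,s)$ as $\sum_{r=0}^m A_r B_r$, where $A_r$ denotes the inner sum over $a_1,a_2$ and $B_r$ the sum over $b_1,\ldots,b_s$. The two factors have been evaluated in Lemma \ref{lfs1} and Lemma \ref{lfs2} respectively, so the whole computation reduces to substituting those closed forms and carrying out the resulting single sum over $r$. Since Lemma \ref{lfs1} produces three distinct behaviours according to the value of the conductor exponent $t$, I would organize the proof into the three cases $t=0$, $t=1$, and $t\geq 2$.

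Before summing, I would record the shape of the right-hand side of \eqref{yoman} in the prime-power setting. Here $\mathfrak{a}_0$ is forced: when $t=0$ we have $\mathfrak{d}=\mathcal{O}_K$ and hence $\mathfrak{a}_0=\mathcal{O}_K$, while for $t\geq 1$ the coprimality condition $\gcd(\mathfrak{a}_0,\mathfrak{a}/\mathfrak{a}_0)=1$ together with $\mathfrak{a}=\mathfrak{p}^m$ forces $\mathfrak{a}_0=\mathfrak{p}^m=\mathfrak{a}$. Consequently the target identity reads $\phi_2(\mathfrak{p}^m)\sigma_s(\mathfrak{p}^m)$ for $t=0$, reads $-\phi(\mathfrak{p}^{2m-1})\sigma_s(\mathfrak{p}^{m-1})$ for $t=1$ (using $\mu(\mathfrak{p})=-1$ and $\phi_2(\mathcal{O}_K)=1$), and vanishes for $t\geq 2$ since $\mu(\mathfrak{p}^t)=0$.

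The core of the argument is then two finite sums. For $t=0$, Lemma \ref{lfs1} gives $A_r=(r+1)\phi_2(\mathfrak{p}^m)$, so after pulling out $\phi_2(\mathfrak{p}^m)$ it remains to prove $\sum_{r=0}^m (r+1)B_r=\sigma_s(\mathfrak{p}^m)$; for $t=1$, we have $A_r=-r\,\phi(\mathfrak{p}^{2m-1})$, reducing matters to $\sum_{r=0}^m r\,B_r=\sigma_s(\mathfrak{p}^{m-1})$. In both cases I would insert the value of $B_r$ from Lemma \ref{lfs2}, namely $N(\mathfrak{p})^{(m-r)s}-N(\mathfrak{p})^{(m-r-1)s}$ for $r<m$ and $1$ for $r=m$, substitute $j=m-r$, and evaluate by Abel summation (summation by parts): the coefficient differences telescope because the weights $r+1$ and $r$ change by one at each step, leaving exactly $\sum_{j=0}^m N(\mathfrak{p})^{js}$ and $\sum_{j=0}^{m-1} N(\mathfrak{p})^{js}$ respectively. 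The case $t\geq 2$ is immediate since every $A_r$ vanishes.

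I do not anticipate a conceptual obstacle; the only delicate points are bookkeeping. First, one must handle the $r=m$ term of $B_r$ separately (it equals $1$ rather than a difference of prime-power norms), and the boundary contributions of the telescoping must be tracked carefully so that the leftover constant combines correctly with the $r=m$ term to restore the missing $N(\mathfrak{p})^0=1$ in $\sigma_s$. Second, one must confirm the degenerate evaluations $\phi_2(\mathcal{O}_K)=1$ and $\sigma_s(\mathcal{O}_K)=1$ used when $\mathfrak{a}_0=\mathfrak{a}$, in particular in the boundary instance $m=t=1$. These are routine, so the main effort is simply executing the two summation-by-parts computations cleanly.
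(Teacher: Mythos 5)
Your proposal is correct and follows essentially the same route as the paper: substitute the closed forms from Lemma \ref{lfs1} and Lemma \ref{lfs2} into the factorized sum \eqref{eqn}, split into the cases $t=0$, $t=1$, $t\geq 2$, and telescope the resulting sum over $r$ by an index shift (your ``Abel summation''). Your explicit identification of $\mathfrak{a}_0$ and of the right-hand side in the prime-power case ($\mathfrak{a}_0=\mathcal{O}_K$ for $t=0$ and $\mathfrak{a}_0=\mathfrak{p}^m$ for $t\geq 1$) is a detail the paper leaves implicit, but it does not change the argument.
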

\begin{proof}
We denote the left hand side of equation \eqref{yoman} by $\mathcal{S}_{\chi}(\mathfrak{p}^m,s)$. Then from the equation \eqref{eqn}, we get 


\begin{equation}\label{JUST ABOVE}
\mathcal{S}_{\chi}(\mathfrak{p}^m,s) = \sum_{r=0}^m \left(\sum_{\substack{ a_1,a_2 \in (\mathcal{O}_{K}/\mathfrak{a})^{*} \\ \gcd(a_1+a_2,\mathfrak{p}^m)=1}} N(\gcd(a_1+a_2-1,\mathfrak{p}^{r})) \chi(a_1)\right) \sum_{\substack{ b_1,b_2,\ldots, b_s \in \mathcal{O}_{K}/\mathfrak{a} \\ \gcd(b_1,b_2,\ldots, b_s,\mathfrak{p}^m) = \mathfrak{p}^r}} 1
\end{equation}


{\bf Case 1. $t=0$}. By \eqref{JUST ABOVE} and Lemma \ref{lfs1} and Lemma \ref{lfs2}, we get

\begin{eqnarray*}
\mathcal{S}_{\chi}(\mathfrak{p}^m,s) &=& 
\sum_{r=0}^m \left(\sum_{\substack{a_1,a_2 (\mathcal{O}_{K}/\mathfrak{a})^{*} \\ \gcd(a_1+a_2,\mathfrak{p}^m)=1}} N(\gcd(a_1+a_2-1, \mathfrak{p}^r))\chi(a_1)\right) \sum_{\substack{ b_1,b_2,\ldots, b_s \mathcal{O}_{K}/\mathfrak{a} \\ \gcd(b_1,b_2,\ldots, b_s,\mathfrak{p}^m) = \mathfrak{p}^r}} 1 \\
&=& \sum_{r=0}^m (r+1)\phi_2(\mathfrak{p}^m)\sum_{\substack{ b_1,b_2,\ldots, b_s \mathcal{O}_{K}/\mathfrak{a} \\ \gcd(b_1,b_2,\ldots, b_s,\mathfrak{p}^m) = \mathfrak{p}^r}} 1 \\
&=& \sum_{r=0}^{m-1}(r+1)\phi_2(\mathfrak{p}^m)\left(N(\mathfrak{p})^{(m-r)s}-N(\mathfrak{p})^{(m-r-1)s}\right)+(m+1)\phi_2(\mathfrak{p}^m) \\
&=& \sum_{r=0}^{m}(r+1)\phi_2(\mathfrak{p}^m)N(\mathfrak{p})^{(m-r)s} - \sum_{r=0}^{m-1}(r+1)\phi_2(\mathfrak{p}^m)N(\mathfrak{p})^{(m-r-1)s} \\
&=& \phi_2(\mathfrak{p}^m) \left[\sum_{r=0}^{m}(r+1)N(\mathfrak{p})^{(m-r)s} - \sum_{r=1}^{m}rN(\mathfrak{p})^{(m-r)s}\right] \\
&=& \phi_2(\mathfrak{p}^m)\sum_{r=0}^m N(\mathfrak{p})^{(m-r)s} \\
&=& \phi_2(\mathfrak{p}^m)\sum_{r=0}^m N(\mathfrak{p})^{rs}\\
&=& \phi_2(\mathfrak{p}^m) \sigma_s(\mathfrak{p}^m).
\end{eqnarray*}

{\bf Case 2. $t=1$.} By \eqref{JUST ABOVE} and Lemma \ref{lfs1} and Lemma \ref{lfs2}, we get 
\begin{eqnarray*}
\mathcal{S}_{\chi}(\mathfrak{p}^m,s) &=& \sum_{r=0}^m \left(\sum_{\substack{a_1,a_2 \in (\mathcal{O}_{K}/\mathfrak{a})^{*} \\ \gcd(a_1+a_2,\mathfrak{p}^m)=1}} N(\gcd(a_1+a_2-1, \mathfrak{p}^r)) \chi(a_1)\right) \sum_{\substack{ b_1,b_2,\ldots, b_s \in \mathcal{O}_{K}/\mathfrak{a} \\ \gcd(b_1,b_2,\ldots, b_s,\mathfrak{p}^m) = \mathfrak{p}^r}} 1 \\
&=& \sum_{r=0}^m -r\phi(\mathfrak{p}^{2m-1})\sum_{\substack{b_1,b_2,\ldots, b_s \in \mathcal{O}_{K}/\mathfrak{a} \\ \gcd(b_1,b_2,\ldots, b_s,\mathfrak{p}^m) = \mathfrak{p}^r}} 1 \\
&=&- \sum_{r=1}^{m-1}r\phi(\mathfrak{p}^{2m-1})\left(N(\mathfrak{p})^{(m-r)s}-N(\mathfrak{p})^{(m-r-1)s}\right)-m\phi(\mathfrak{p}^{2m-1}) \\
&=& -\phi(\mathfrak{p}^{2m-1}) \left[\sum_{r=1}^m rN(\mathfrak{p)}^{(m-r)s} - \sum_{r=1}^{m-1} rN(\mathfrak{p})^{(m-r-1)s}\right] \\
&=& -\phi(\mathfrak{p}^{2m-1}) \left[\sum_{r=1}^m rN(\mathfrak{p})^{(m-r)s} - \sum_{r=2}^{m} (r-1)N(\mathfrak{p)}^{(m-r)s}\right] \\
&=& -\phi(\mathfrak{p}^{2m-1}) \left[N(\mathfrak{p})^{(m-1)s} + \sum_{r=2}^m N(\mathfrak{p})^{(m-r)s}\right] \\
&=& -\phi(\mathfrak{p}^{2m-1})\sum_{r=1}^m N(\mathfrak{p})^{(m-r)s} \\
&=& -\phi(\mathfrak{p}^{2m-1})\sigma_s(\mathfrak{p}^{m-1}).
\end{eqnarray*}

{\bf Case 3. $t\geq 2$.} By \eqref{JUST ABOVE} and Lemma \ref{lfs1}, we get 
\begin{eqnarray*}
\mathcal{S}_{\chi}(\mathfrak{p}^m,s) &=& \sum_{r=0}^m \left(\sum_{\substack{ a_1,a_2 \in (\mathcal{O}_{K}/\mathfrak{a})^{*} \\ \gcd(a_1+a_2,\mathfrak{p}^m)=1}} N(\gcd(a_1+a_2-1, \mathfrak{p}^r)) \chi(a_1)\right) \sum_{\substack{ b_1,b_2,\ldots, b_s \in \mathcal{O}_{K}/\mathfrak{a} \\ \gcd(b_1,b_2,\ldots, b_s,\mathfrak{p}^m) = \mathfrak{p}^r}} 1 \\
&=& 0.
\end{eqnarray*}

\end{proof}

\section{Proof of Theorem \ref{MAIN-TH}}
We first prove that $\mathcal{S}_{\chi}(\mathfrak{a},s)$ is a multiplicative function in the first variable. Then, using the
multiplicative property, we prove Theorem \ref{mt} by combining with the prime power case.

Let $\mathfrak{a}=\mathfrak{a}_{1}\mathfrak{a_{2}}$, where $\mathfrak{a}_{1}$ and $\mathfrak{a}_{2}$ are two integral ideals of $\mathcal{O}_{K}$ such that $\gcd(\mathfrak{a}_{1},\mathfrak{a}_{2}) = 1$. Then by the Chinese Remainder Theorem, we have the following isomorphism
$$\Psi : (\mathcal{O}_{K}/\mathfrak{a})^{*} \rightarrow (\mathcal{O}_{K}/\mathfrak{a}_{1})^{*} \times (\mathcal{O}_{K}/\mathfrak{a}_{2})^{*}$$ given by  \hspace*{3cm}
$\Psi(a)=(a^{'},a^{''})$, \; where $a \equiv a{'} \pmod {\mathfrak{a}_{1}}$ and $a \equiv a{''} \pmod {\mathfrak{a}_{2}}$.

\bigskip

Let
$$\mathcal{S}(\mathfrak{a}) = \{ (a_1,a_2): a_1,a_2, a_1+a_2 \in (\mathcal{O}_{K}/\mathfrak{a})^{*}\},$$ $$\mathcal{S}(\mathfrak{a}_{1}) = \{ (a_1,a_2): a_1,a_2, a_1+a_2 \in (\mathcal{O}_{K}/\mathfrak{a}_{1})^{*}\}$$ and $$\mathcal{S}(\mathfrak{a}_{2}) = \{ (a_1,a_2): a_1,a_2, a_1+a_2 \in (\mathcal{O}_{K}/\mathfrak{a}_{2})^{*}\},$$ 

We define the map $f : \mathcal{S}(\mathfrak{a}) \rightarrow \mathcal{S}(\mathfrak{a}_{1})\times \mathcal{S}(\mathfrak{a}_{2})$, given by $f((a_1,a_2)) = (\Psi(a_1),\Psi(a_2))$. Since $\Psi$ is an isomorphism of groups, it follows that $f$ is a bijection. Also, every Dirichlet character $\chi$ modulo $\mathfrak{a}$ can be uniquely factored as a product of the form $\chi= \chi_1 \chi_2$, where $\chi_1$ and $\chi_2$ are Dirichlet characters modulo $\mathfrak{a}_{1}$ and $\mathfrak{a}_{2}$, respectively. Therefore, we have
\begin{eqnarray*}
\mathcal{S}_{\chi}(\mathfrak{a},s) &=& \sum_{\substack{ a_1, a_2, b_1,b_2,\ldots, b_s \in \mathcal{O}_{K}/\mathfrak{a} \\ a_{1}a_{2}, a_{1} + a_{2} \in (\mathcal{O}_{K}/\mathfrak{a})^{*}}} N(\gcd(a_1+a_2-1,b_1,b_2,\ldots, b_s,\mathfrak{a}))\chi(a_1) \\
&=& \sum_{\substack{ a_1, a_2, b_1,b_2,\ldots, b_s \in \mathcal{O}_{K}/\mathfrak{a}_{1} \\ a_{1}a_{2}, a_{1} + a_{2} \in (\mathcal{O}_{K}/\mathfrak{a}_{1})^{*}}} N(\gcd(a_1+a_2-1,b_1,b_2,\ldots, b_s,\mathfrak{a}_{1}))\chi_{1}(a_1) \\
& \times & \sum_{\substack{ a_1, a_2, b_1,b_2,\ldots, b_s \in \mathcal{O}_{K}/\mathfrak{a}_{2} \\ a_{1}a_{2}, a_{1} + a_{2} \in (\mathcal{O}_{K}/\mathfrak{a}_{2})^{*}}} N((a_1+a_2-1,b_1,b_2,\ldots, b_s,\mathfrak{a}_{2}))\chi_{2}(\mathfrak{a}_{2}) \\
&=& \mathcal{S}_{\chi_{1}}(\mathfrak{a}_{1},s)\mathcal{S}_{\chi_{2}}(\mathfrak{a}_{2},s).
\end{eqnarray*}
Now, we complete the proof of Theorem \ref{MAIN-TH}. Let $\mathfrak{a} = \mathfrak{p}_{1}^{e_1}\mathfrak{p}_{2}^{e_2}\ldots \mathfrak{p}_{t}^{e_t}$ be the factorization of $\mathfrak{a}$ into product of prime ideals and let $\chi$ be a Dirichlet character modulo $\mathfrak{a}$ with conductor $\mathfrak{d}$. Then $\chi$ can be uniquely written as $\chi = \chi_1 \chi_2 \ldots \chi_t$, 
where for each $j \in \{1,\ldots ,t\}$, the character $\chi_j$ is a Dirichlet character modulo $\mathfrak{p}_{j}^{e_j}$ with conductor $\mathfrak{d}_{j}$, for some $\mathfrak{d}_{j} \mid \mathfrak{p}_{j}^{e_{j}}$. Then we have $\mathfrak{d} = \mathfrak{d}_{1} \mathfrak{d}_{2}\ldots \mathfrak{d}_{t}$. Consequently, by using Theorem \ref{mtp}, we obtain
\begin{eqnarray*}
\mathcal{S}_{\chi}(\mathfrak{a},s) &=& \prod_{i=1}^t \mathcal{S}_{\chi_i}(\mathfrak{p}_{i}^{e_i},s)\\
&=& \mu(\mathfrak{d}) \phi\left(\frac{\mathfrak{a}_{0}^2}{\mathfrak{d}}\right) \phi_2\left(\frac{\mathfrak{a}}{\mathfrak{a}_0}\right)\sigma_s\left(\frac{\mathfrak{a}}{\mathfrak{d}}\right).
\end{eqnarray*}
where $\mathfrak{a}_{0}$ is a divisor of $\mathfrak{a}$ such that $\mathfrak{a}_{0}$ has the same set of prime divisors as that of $\mathfrak{d}$ and $\gcd \left(\mathfrak{a}_{0}, \frac{\mathfrak{a}}{\mathfrak{a}_{0}}\right) = 1$. This completes the proof of Theorem \ref{MAIN-TH}. $\hfill\Box$

\medskip

{\bf Acknowledgements.} It is a pleasure to thank Prof. R. Thangadurai for carefully reading the manuscript and giving us some valuable suggestions that improved the readability of the paper. The first author thanks Indian Institute of Technology, Guwahati and the second author thanks Ramakrishna Mission Vivekananda Educational and Research Institute, Belur Math for providing financial support.

%
%

\end{document}